\theoremstyle{plain}
\newtheorem{theorem}{Theorem}[section]
\newtheorem{lemma}[theorem]{Lemma}
\theoremstyle{definition}
\newtheorem{example}[theorem]{Example}
\theoremstyle{remark}
\begin{document}
	\title[On line upper ideal relation graphs of rings]{On line upper ideal relation graphs of rings}

   \author[Mohd Shariq, Praveen Mathil, Jitender Kumar]{ Mohd Shariq, Praveen Mathil, Jitender Kumar$^{*}$}
   \address{Department of Mathematics, Birla Institute of Technology and Science Pilani, Pilani-333031, India}
 \email{shariqamu90@gmail.com, maithilpraveen@gmail.com,  jitenderarora09@gmail.com}

\begin{abstract}

 The upper ideal relation graph $\Gamma_{U}(R)$ of a commutative ring $R$ with unity is a simple undirected graph with the set of all non-unit elements of $R$ as a vertex set and two vertices $x$, $y$ are adjacent if and only if the principal ideals $(x)$ and $(y)$ are contained in the principal ideal $(z)$ for some non-unit element $z\in R$. This manuscript characterizes all the Artinian rings $R$ such that the graph $\Gamma_{U}(R)$ is a line graph. Moreover, all the Artinian rings $R$ for which $\Gamma_{U}(R)$ is the complement of a line graph have been described.
 

\end{abstract}
 \subjclass[2020]{05C25, 13A99}
\keywords{upper ideal relation graph, Artinian ring, local ring, line graph }
\maketitle

\section{Historical Background and Preliminaries}

The study of graphs linked with algebraic structures is a significant area of research within the algebraic graph theory. This field connects algebra and graph theory, two essential branches of mathematics. Investigating graphs associated with algebraic structure is valuable due to their practical application as demonstrated by notable literature works (see\cite{kelarev2003graph,kelarev2009cayley,kelarev2004labelled}). Several graphs associated to rings including the cozero-divisor graph \cite{afkhami2011cozero}, zero-divisor graph \cite{Anderson1999zero}, annihilator graph \cite{badawi2014annihilator}, intersection graph of ideals \cite{chakrabarty2009intersection}, co-maximal graph \cite{maimani2008comaximal}, prime ideal sum graph \cite{saha2023prime}, upper ideal relation graph \cite{baloda2023study} of rings have been extensively studied in the literature. Line graph of a graph $\Gamma$ is a graph which is free from certain induced subgraphs  (see Theorem \ref{linegraphchar}). The line graph characterization of various graphs associated with algebraic structures has been extensively studied. For instance, all the groups with line power graphs (or line enhanced power graphs) have been ascertained in \cite{bera2022line,kumar2023finite}. All rings whose zero-divisor graphs and cozero-divisor graphs are line graphs have been determined in \cite{barati2021line,khojasteh2022line}.  Line graphs of unit graphs associated with the direct product of rings have been investigated in \cite{pirzada2022line}. Various graph-theoretic parameters of the line graph of the zero divisor graph and unit graph have been investigated in \cite{singh2022graph} and \cite{MR4543335}, respectively.
 
Baloda \textit{et al.} \cite{baloda2023study} introduced and studied the upper ideal relation graphs of rings.  They have investigated the minimum degree, metric dimension, perfectness, planarity, and the independence number of $\Gamma_U(R)$. The upper ideal relation graph $\Gamma_{U}(R)$ of a commutative ring $R$ with unity is a simple undirected graph with the set of all non-unit elements of $R$ as a vertex set and two vertices $x$, $y$ are adjacent if and only if the principal ideals $(x)$ and $(y)$ are contained in the principal ideal $(z)$ for some non-unit element $z\in R$. Further, embeddings of $\Gamma_U(R)$ on certain surfaces have been examined in \cite{Barkha}.

This manuscript investigates the line graph characterization of the upper ideal relation graph of a ring. Section \ref{section2} characterizes all the Artinian rings $R$ such that $\Gamma_{U}(R)$ is a line graph. Moreover, all Artinian rings $R$ such that $\Gamma_{U}(R)$ is the complement of a line graph have been described in Section \ref{section3}. Now, we recall the essential definitions and results. By \emph{graph} $\Gamma(V, E)$, we refer to an undirected simple graph with vertex set $V$ and edge set $E$.  A graph $\Gamma'(V', E')$ is a \emph{subgraph}  of $\Gamma$ if and only if $V'\subseteq V $ and $E'\subseteq E $. The complement  $\overline{\Gamma}$ of a graph $\Gamma$ such that $V(\overline{\Gamma})=V(\Gamma)$  and ${x,y}\in V(\overline{\Gamma})$ are adjacent if and only if they are not adjacent in $\Gamma$.


The subgraph induced by the set $X \subseteq V(\Gamma)$, denoted by $\Gamma(X)$, is the graph such that  $V(\Gamma(X))=X$ and $x,y$ are adjacent in $\Gamma(X)$ if and only if they are adjacent in $\Gamma$. A \emph{path} in a graph is a sequence of vertices where each pair of consecutive vertices is connected by an edge. The path graph $P_n$ is a path on $n$ vertices. A graph $\Gamma$ is labelled \emph{complete} if every pair of vertices is joined in $\Gamma$. We use $K_n$ to denote the complete graph with $n$ vertices. By $mK_n$ means $m$ copies of $K_n$. A \emph{bipartite} graph $\Gamma$ is a graph whose vertex set can be partitioned into two subsets such that no two vertices in the same partition subset are adjacent. The \emph{complete bipartite graph} with partition size $p$ and $q$ is denoted by $K_{p, q}$. 

If the ring $R$ has exactly one maximal ideal then $R$ is called a \emph{local ring}. By the structural theorem of Artinian ring \cite{atiyah1969introduction}, a commutative Artinian ring $R$ is isomorphic to the direct product of local rings $R_i$ that is $R \cong R_1 \times R_2 \times \cdots \times R_n$. Throughout the paper, $F_i$ is a field. The ideal generated by $x\in R$ is denoted by $(x)$. The set of all the unit elements and zero divisors of the ring $R$ is denoted by $U(R)$ and $Z(R)$, respectively. The \emph{characteristic} $char(R)$ of a ring is the least positive integer $n$ such that $nx=0$ for all $x\in R$. Other basic results and definitions on rings can be referred to  \cite{atiyah1969introduction}. The line graph $L(\Gamma)$ of the graph $\Gamma$ such that $V(L(\Gamma))=E(\Gamma)$ and its vertices are adjacent if they have a common vertex in $\Gamma$. The following descriptions of the line graphs will be used often (see \cite{beineke1970characterizations}).  

 \begin{theorem}\label{linegraphchar} 
  A graph is a line graph of some graph if and only if it is free from the nine induced subgraphs $\Gamma_i$$(1\leq i\leq 9)$ given in  \rm{Figure \ref{forbiddengraphs}}.
    \begin{figure}[h!]
			\centering
			\includegraphics[width=1 \textwidth]{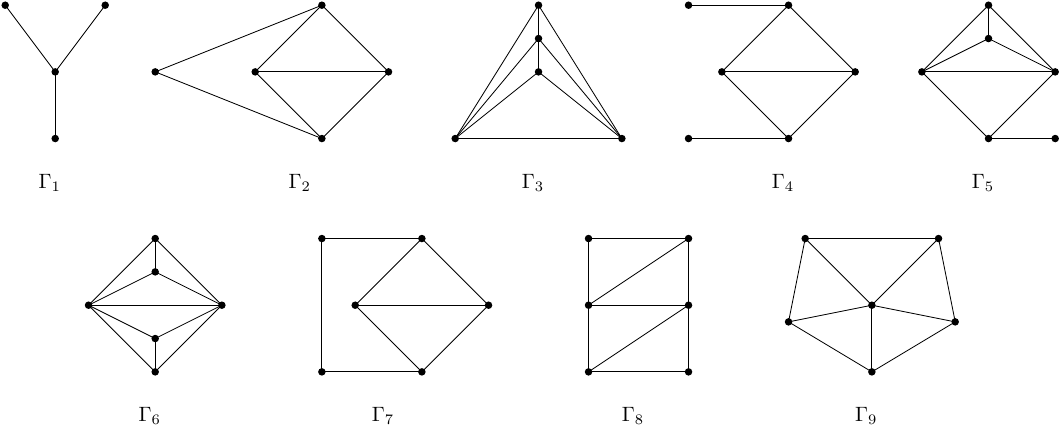}
			\caption{Forbidden induced subgraphs of line graph }
   \label{forbiddengraphs}
\end{figure}
 \end{theorem}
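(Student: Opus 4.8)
This is the classical forbidden–subgraph characterization of line graphs due to Beineke \cite{beineke1970characterizations} (equivalent to the criteria of Krausz and of van Rooij and Wilf), so in this manuscript it is used purely as a cited result; the argument one would reconstruct runs as follows. For the ``only if'' direction, one first records that the class of line graphs is closed under taking induced subgraphs: if $\Gamma = L(H)$ and $X \subseteq V(\Gamma) = E(H)$, then the induced subgraph $\Gamma(X)$ is exactly $L(H_X)$, where $H_X$ is the spanning subgraph of $H$ with edge set $X$. Consequently it suffices to check that none of the nine graphs $\Gamma_1,\dots,\Gamma_9$ of Figure \ref{forbiddengraphs} is itself a line graph. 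For $\Gamma_1 = K_{1,3}$ this is immediate: if $K_{1,3} = L(H)$, the central vertex corresponds to an edge $e = uv$ of $H$ incident to three pairwise non-incident edges, but by pigeonhole two of those three edges share an endpoint of $e$ and are therefore incident, a contradiction. Each of the remaining eight graphs is excluded by a similarly short incidence argument on a hypothetical root graph.

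For the ``if'' direction, which carries all the content, the plan is to produce a \emph{Krausz partition} of $\Gamma$: a partition of $E(\Gamma)$ into cliques such that every vertex lies in at most two of them. By Krausz's theorem the existence of such a partition is equivalent to $\Gamma$ being a line graph, the root graph being recovered by taking the cliques as vertices, joining two of them when they share a vertex of $\Gamma$, and attaching a pendant edge for each vertex of $\Gamma$ lying in only one clique. The partition is built locally. Since $\Gamma$ has no induced $K_{1,3}$, the neighbourhood $N(v)$ of every vertex $v$ contains no independent set of size three, so $\overline{\Gamma(N(v))}$ is triangle-free; invoking in addition the forbidden graphs on five and six vertices one shows that $\Gamma(N(v))$ is a union of at most two cliques, and that this decomposition can be chosen canonically. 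One then separates the triangles of $\Gamma$ into \emph{odd} ones (some vertex of $\Gamma$ is adjacent to an odd number of their vertices) and \emph{even} ones, and proves the key coherence lemma: if two distinct odd triangles share an edge, then the four vertices concerned induce a $K_4$. Gluing the local clique decompositions along common edges and triangles, consistently thanks to this lemma, yields the required Krausz partition; equivalently one argues by induction on $|V(\Gamma)|$, reconstructing a root graph of $\Gamma$ from one of $\Gamma$ minus a carefully chosen vertex.

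The genuine obstacle lies entirely in the ``if'' direction, and within it in the global consistency of the local clique decompositions: knowing that each neighbourhood splits into at most two cliques is not enough, because these local choices must be amalgamated along shared edges and triangles without conflict, and it is precisely the eight non-claw graphs of Figure \ref{forbiddengraphs} that rule out the configurations obstructing such an amalgamation. The full details are in \cite{beineke1970characterizations}; for the purposes of this paper only the statement of Theorem \ref{linegraphchar} is needed, and it will be applied throughout Sections \ref{section2} and \ref{section3} either by exhibiting some $\Gamma_i$ as an induced subgraph of $\Gamma_U(R)$ or by verifying that none of them occurs.
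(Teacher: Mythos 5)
The paper does not prove this statement at all---it is recalled verbatim from Beineke \cite{beineke1970characterizations} as background, so there is no internal argument to compare against. Your treatment matches this (you correctly identify it as a cited classical result), and your sketch of the standard proof---induced-subgraph closure plus case checks for the ``only if'' direction, and a Krausz clique partition built from the local decomposition of each neighbourhood for the ``if'' direction---is an accurate account of the classical argument.
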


  \begin{theorem}\label{complement}
  A graph is the complement of a line graph if and only if it is free from the nine induced subgraphs $\overline{\Gamma_i}$ $(1\leq i\leq9)$ given in \rm{Figure \ref{complenentforbiddengraphs}}. 
    \begin{figure}[h!]
			\centering
			\includegraphics[width=1 \textwidth]{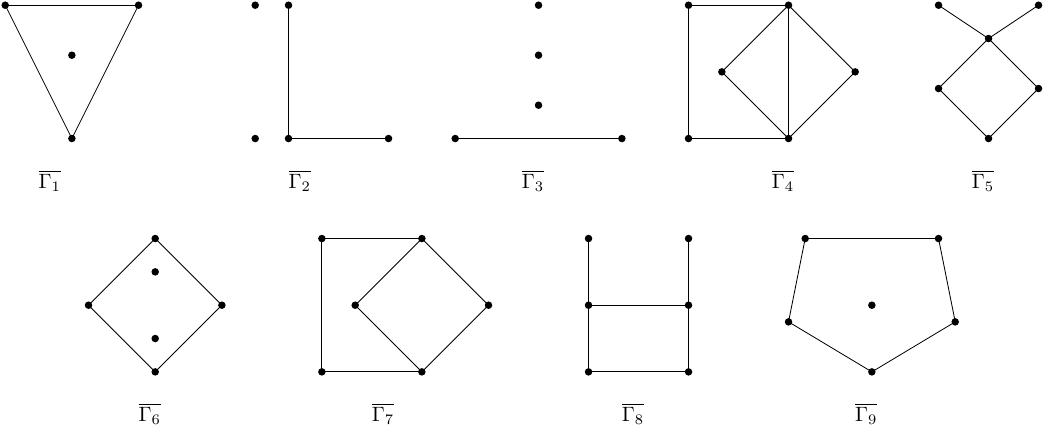}
			\caption{Forbidden induced subgraphs of complement of line graphs }
   \label{complenentforbiddengraphs}
\end{figure}
 \end{theorem}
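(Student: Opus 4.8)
The plan is to obtain Theorem \ref{complement} as a purely formal consequence of Beineke's Theorem \ref{linegraphchar}, exploiting that complementation is an involution on graphs that commutes with the operation of passing to an induced subgraph. First I would unwind the definition: a graph $H$ is the complement of a line graph exactly when $\overline{H}$ is itself a line graph of some graph. Applying Theorem \ref{linegraphchar} to $\overline{H}$, this happens if and only if $\overline{H}$ contains no induced subgraph isomorphic to any of the nine graphs $\Gamma_i$ $(1\leq i\leq 9)$ of Figure \ref{forbiddengraphs}.

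The second step is the elementary observation that for graphs $A,B$ and a vertex subset $S\subseteq V(A)$ with $|S|=|V(B)|$, one has $\overline{A}[S]=\overline{A[S]}$ (since for $u,v\in S$, $u$ and $v$ are adjacent in $\overline{A}[S]$ iff they are non-adjacent in $A[S]$), and hence $A[S]\cong B$ if and only if $\overline{A}[S]\cong \overline{B}$. In particular $\overline{H}$ has an induced copy of $\Gamma_i$ if and only if $H=\overline{\overline{H}}$ has an induced copy of $\overline{\Gamma_i}$. Chaining this with the previous paragraph, $H$ is the complement of a line graph if and only if $H$ is free of each of the nine induced subgraphs $\overline{\Gamma_i}$ $(1\leq i\leq 9)$, which is precisely the assertion of Theorem \ref{complement}, with Figure \ref{complenentforbiddengraphs} exhibiting the nine complemented forbidden configurations $\overline{\Gamma_i}$.

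I do not expect a real obstacle in this argument, since all of the combinatorial substance is already contained in Theorem \ref{linegraphchar}; the reduction to complements is bookkeeping. The only point deserving a moment of care is confirming that the nine graphs drawn in Figure \ref{complenentforbiddengraphs} are genuinely the vertex-complements of the nine graphs in Figure \ref{forbiddengraphs} — that is, the work is in checking the figure, not the logic. If one instead wanted a self-contained proof not invoking Theorem \ref{linegraphchar}, one would have to reprove Beineke's forbidden-subgraph theorem from scratch, which is considerably more involved and unnecessary for our purposes.
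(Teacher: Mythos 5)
Your argument is correct: the paper states this result without proof, citing Beineke's characterization, and your derivation --- $H$ is the complement of a line graph iff $\overline{H}$ is a line graph iff $\overline{H}$ avoids every $\Gamma_i$ iff $H$ avoids every $\overline{\Gamma_i}$, using that complementation commutes with passing to induced subgraphs --- is exactly the standard justification. The one point you rightly flag, that the graphs in Figure \ref{complenentforbiddengraphs} must actually be the complements of those in Figure \ref{forbiddengraphs}, is a matter of checking the figure rather than the logic, and nothing more is needed.
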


\section{Characterization of rings $R$ such that $\Gamma_U(R)$ is a line graph }\label{section2}

 In this section, we shall classify all the Artinian rings whose upper ideal relation graphs are line graphs. We begin with a characterization of non-local rings $R$ such that $\Gamma_U(R)$ is a line graph.


 \begin{theorem}\label{fourproduct}
Let $R$ be a non-local ring and let $F_1$, $F_2$ be fields. Then $\Gamma_U(R)$ is a line graph of some graph if and only if $R\cong F_1\times F_2$.

  \end{theorem}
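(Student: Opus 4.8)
The plan is to prove both directions by analyzing the structure of $\Gamma_U(R)$ when $R \cong R_1 \times \cdots \times R_n$ is a product of $n \ge 2$ local rings. For the ``if'' direction, I would first understand $\Gamma_U(F_1 \times F_2)$ explicitly: the non-unit elements are exactly those with at least one coordinate zero, so the vertices are $(a,0)$ with $a \in F_1$ and $(0,b)$ with $b \in F_2$ (the element $(0,0)$ included). Two vertices $x,y$ are adjacent iff $(x),(y) \subseteq (z)$ for some non-unit $z$; since in $F_1 \times F_2$ the principal ideals are $0$, $F_1 \times 0$, $0 \times F_2$, and $F_1 \times F_2$, one checks that $(x),(y)$ are both contained in a proper principal ideal precisely when they lie in a common coordinate hyperplane. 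This makes $\Gamma_U(F_1\times F_2)$ a ``double star'' / friendship-type configuration: two cliques $K_{|F_1|}$ and $K_{|F_2|}$ sharing the common vertex $(0,0)$. Such a graph (two complete graphs glued at a single vertex) is easily seen to be a line graph --- e.g.\ it is $L(\Gamma')$ where $\Gamma'$ is obtained by taking a vertex $v$ with $|F_1|-1$ pendant edges and $|F_2|-1$ pendant edges, with two of those edges merged appropriately; more cleanly, one verifies directly that none of the nine forbidden subgraphs $\Gamma_i$ of Theorem \ref{linegraphchar} embeds, since every vertex outside the cut vertex has its entire neighborhood inside one clique.

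For the ``only if'' direction, I would show that whenever $R$ is non-local and not isomorphic to $F_1 \times F_2$, the graph $\Gamma_U(R)$ contains one of the forbidden induced subgraphs --- most likely $K_{1,3}$ (the claw, which is $\Gamma_1$) or $\Gamma_2$ (the ``bull''-type / $K_4$ minus edge with a pendant). The cases to dispatch: (i) $R \cong R_1 \times R_2$ where at least one $R_i$ is local but not a field, i.e.\ has a nonzero maximal ideal $\mathfrak{m}_i$; (ii) $R \cong R_1 \times R_2 \times R_3$ a product of three or more local rings. In case (ii), I expect to exhibit a claw: take the vertex $(0,0,0)$ adjacent to $(1,0,0)$, $(0,1,0)$, $(0,0,1)$, and check these three leaves are pairwise non-adjacent --- indeed $(1,0,0)$ and $(0,1,0)$ generate ideals whose only common upper bound is all of $R$ (the smallest principal ideal containing both is $R_1 \times R_2 \times 0$, which is not proper only when... need to check this contains a non-unit generator; actually $R_1 \times R_2 \times 0 = (1,1,0)$ and $(1,1,0)$ is a non-unit, so they ARE adjacent). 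So the claw argument needs care; I would instead look for an induced $\Gamma_2$ or go through a more refined count. In case (i), say $R_1$ has nonzero nilpotent or a nonzero proper ideal, pick $t \in \mathfrak{m}_1 \setminus\{0\}$ and build a forbidden configuration among $(0,0), (t,0), (1,0), (0,1)$ and a fifth vertex.

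The main obstacle I anticipate is the ``only if'' direction's case analysis, specifically pinning down adjacency in products: the condition ``$(x),(y) \subseteq (z)$ for some non-unit $z$'' translates, via the structure theorem, to ``the coordinatewise-smallest principal ideal containing $(x)$ and $(y)$ is contained in a proper principal ideal'', and in a product of fields every such join is itself principal, so adjacency reduces to ``$x$ and $y$ have a common zero coordinate'' --- but once a factor $R_i$ is not a field, the local structure interferes and one must track which ideals of $R_i$ are principal and whether a generator is a unit. I would handle this by first proving a clean lemma describing $\Gamma_U(R_1 \times \cdots \times R_n)$ in terms of the graphs $\Gamma_U(R_i)$ and the ideal lattices, then reduce to a finite check. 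A secondary obstacle is that $F_1, F_2$ may be infinite, so ``line graph'' must be argued via the forbidden-subgraph criterion rather than by exhibiting a finite preimage graph; fortunately Theorem \ref{linegraphchar} is stated for arbitrary graphs, so the local (bounded-neighborhood) structure of $\Gamma_U(F_1 \times F_2)$ suffices to rule out all nine $\Gamma_i$.
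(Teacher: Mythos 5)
Your ``if'' direction is essentially the paper's argument: $\Gamma_U(F_1\times F_2)$ is two cliques sharing the single vertex $(0,0)$, i.e.\ $K_1\bigvee(K_{|F_1|-1}\bigcup K_{|F_2|-1})$, and one checks via Theorem \ref{linegraphchar} that none of the nine forbidden induced subgraphs can occur (which also handles infinite fields, as you note). That part is fine.

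The ``only if'' direction, however, has a genuine gap: you never actually exhibit a forbidden induced subgraph in either of the two cases you must dispatch. For $n\geq 3$ you propose the claw with center $(0,0,0)$ and leaves $(1,0,0),(0,1,0),(0,0,1)$, then correctly observe that these leaves are pairwise adjacent (e.g.\ both $(1,0,0)$ and $(0,1,0)$ lie in the proper principal ideal generated by the non-unit $(1,1,0)$), so the configuration fails --- and you leave it at ``look for an induced $\Gamma_2$ or a more refined count.'' The missing idea is to take as leaves the \emph{complements} of the coordinate idempotents: $(0,1,1,\ldots,1)$, $(1,0,1,\ldots,1)$, $(1,1,0,\ldots,1)$. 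Any two of these generate, coordinatewise, the whole ring (in every coordinate at least one of them is a unit), so no proper principal ideal contains both; together with the center $(0,0,\ldots,0)$ this is an honest induced $\Gamma_1=K_{1,3}$. For $n=2$ with a non-field factor your sketch (``build a forbidden configuration among $(0,0),(t,0),(1,0),(0,1)$ and a fifth vertex'') is not a proof, and a claw is not available here, so one must fall back on the larger forbidden graphs: the paper uses the five-vertex set $\{(a,0),(0,b),(0,0),(a,1),(1,b)\}$ (inducing $\Gamma_3$) when neither factor is a field, and the six-vertex set $\{(1,0),(1,a),(0,0),(0,a),(0,1),(0,u)\}$ (inducing $\Gamma_6$) when exactly one factor fails to be a field. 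Without explicit configurations of this kind, verified against the adjacency rule, the forward implication is not established.
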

  \begin{proof}
Since $R$ is a non-local commutative ring, we have  $R \cong R_1 \times R_2 \times \cdots \times R_n$, where $n\geq 2$ and each $R_i$ is a local ring. First assume that $\Gamma_U(R)$ is a line graph. Let $n\geq 3$. For the set $A = \{$(0,0,0,\ldots,0)$,$(0,1,1,\ldots,1)$, $(1,0,1,\ldots,1)$,$(1,1,0,\ldots,1)$\}$, notice that the subgraph of $\Gamma_U(A) \cong \Gamma_1$, a contradiction. Therefore, we get $R\cong R_1\times R_2$. If $R_1$ and $R_2$ are not fields, then $|Z(R_1)|\ge2$ and $|Z(R_2)|\ge2$. Let $a\in Z(R_1)\setminus\{0\}$ and $b\in Z(R_2)\setminus\{0\}$. For the set $S=\{(a,0),(0,b),(0,0),(a,1),(1,b)\}$, we have   $\Gamma_U(S)\cong\Gamma_3$. Consequently, $\Gamma_U(R_1\times R_2)$ can not be a line graph.
 
 We may now suppose that one of the rings $R_1$ or $R_2$ is a field. With no loss of generality, assume that $R_2$ is not a field, Therefore, $|U(R_2)|\ge2$ and $|Z(R_2)|\ge2$. Let $a\in Z(R_2)\setminus\{0\}$, $u\in U(R_2)\setminus\{1\}$. Consider the set $Y=\{( 1,0), ( 1,a),( 0,0),( 0,a),( 0,1),( 0,u)\}$. Note that  $\Gamma_U(Y)\cong \Gamma_6$, which is not possible. Thus, $R\cong F_1\times F_2$.

 Conversely, let $R\cong F_1\times F_2$. Recall that $\Gamma_U(F_1\times F_2)=K_1\bigvee(K_{|F_1|-1}\bigcup K_{|F_2|-1})$ (see \cite{baloda2023study}). Then a subgraph $\Gamma$ of $\Gamma_U(F_1\times F_2)$  induced by any subset of $V(\Gamma_U(F_1\times F_2))\setminus \{(0,0)\}$ is either a complete graph or a disjoint union of complete graphs. It follows that $\Gamma \not \cong \Gamma_i$, where $1\leq i\leq9$. Further, for a subgraph $\Gamma'$ of $\Gamma_U(F_1\times F_2)$ induced by a set of vertices including (0,0), $\Gamma'$ can be isomorphic to one of the subgraphs $\Gamma_1$, $\Gamma_3$ and $\Gamma_9$. But this is also not possible because none of the graph $\Gamma_1$, $\Gamma_3$, $\Gamma_9$ is isomorphic to the graph $K_1\bigvee(K_m\bigcup K_n)$.
   Therefore, there always exists a graph whose line graph is the graph $\Gamma_U(F_1\times F_2)$. This completes our proof.
   \end{proof} 
  \begin{example}
     Let $R\cong  \mathbb{Z}_3\times\mathbb{Z}_5$. Then observe that $\Gamma_U(\mathbb{Z}_3\times\mathbb{Z}_5)=L(K)$ (see Figure 3).
      \end{example}
 \begin{figure}[h!]
			\centering
			\includegraphics[width=0.8 \textwidth]{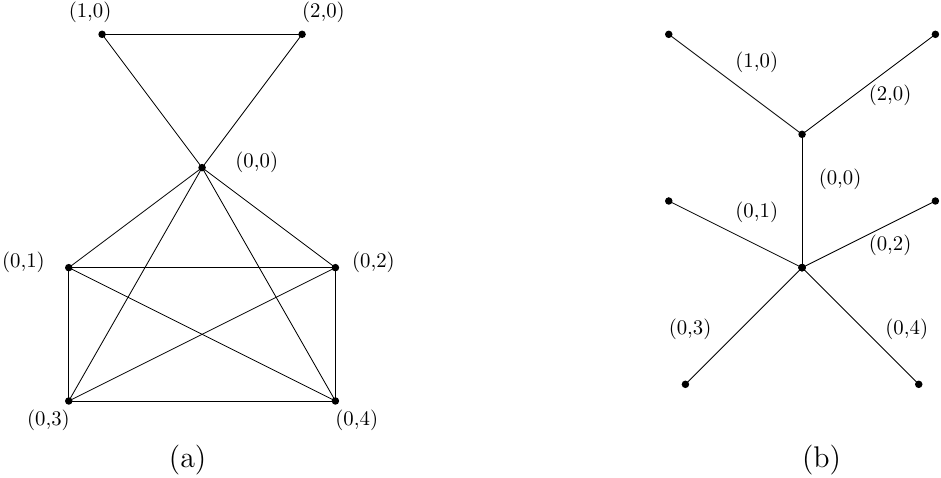}
		\caption{ $\textbf{(a)}$  ${\Gamma_U(\mathbb{Z}_3\times \mathbb{Z}_5)}$ $\hspace{.05cm}$ $\textbf{(b)}$ $K$ }
    \label{figexample.pd}
 \end{figure}
 
 Next, we investigate Artinian local commutative rings such that their upper ideal relation graphs are line graphs of some graph.
 \begin{theorem}
The upper ideal relation graph of a local ring $R$ is a line graph of some graph if and only if $R$ is a principal ideal ring. 
 \end{theorem}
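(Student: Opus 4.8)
The plan is to combine an explicit description of $\Gamma_U(R)$ for a local ring with Theorem \ref{linegraphchar}. First I would record three elementary facts: for a local ring the vertex set of $\Gamma_U(R)$ is the maximal ideal $\mathfrak{m}$; the element $0$ is a universal vertex (for a non-zero non-unit $x$ the witness $z=x$ gives $(0)\subseteq(z)$ and $(x)\subseteq(z)$); and two non-zero non-units $x,y$ are adjacent exactly when $(x)+(y)\subseteq(z)$ for some non-unit $z$.

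For the ``if'' direction, assume $R$ is a principal ideal ring. For non-units $x,y$ the ideal $(x)+(y)$ is principal, say $(x)+(y)=(z)$; since $x,y\in\mathfrak{m}$ we have $(z)\subseteq\mathfrak{m}$, so $z$ is a non-unit, and $(x),(y)\subseteq(z)$. Hence any two vertices of $\Gamma_U(R)$ are adjacent, i.e.\ $\Gamma_U(R)$ is a complete graph; as $K_n=L(K_{1,n})$, it is a line graph.

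For the converse I would argue by contrapositive. If the local ring $R$ is not a principal ideal ring then, using that $R$ is Artinian, its maximal ideal $\mathfrak{m}$ is not principal, so $\dim_{R/\mathfrak{m}}(\mathfrak{m}/\mathfrak{m}^2)\ge2$ by Nakayama's lemma; choose $a,b\in\mathfrak{m}$ with linearly independent images in $\mathfrak{m}/\mathfrak{m}^2$. The key point is that no proper principal ideal can contain both $a$ and $b$: if $a=rz$ and $b=sz$ with $z\in\mathfrak{m}$, then $r,s\notin\mathfrak{m}$ (otherwise $a$ or $b$ lies in $\mathfrak{m}^2$ and has zero image), so $r,s$ are units and the images of $a$ and $b$ are nonzero scalar multiples of that of $z$, contradicting independence. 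Granting this, the vertices $0,a,b,a+b$ are distinct (independence excludes $a=b$ and $a+b\in\{0,a,b\}$), the vertex $0$ is adjacent to the other three, and no two of $a,b,a+b$ are adjacent since a common principal over-ideal of any two of them would contain both $a$ and $b$; thus $\{0,a,b,a+b\}$ induces $\Gamma_1\cong K_{1,3}$ and, by Theorem \ref{linegraphchar}, $\Gamma_U(R)$ is not a line graph. The step I expect to need the most care is the reduction ``$R$ not a principal ideal ring $\Rightarrow$ $\mathfrak{m}$ not principal'', which is precisely where the Artinian hypothesis is used; everything after that is short combinatorics.
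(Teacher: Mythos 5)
Your proposal is correct and follows essentially the same route as the paper: the ``if'' direction shows $\Gamma_U(R)$ is complete (hence the line graph of a star), and the converse exhibits $\Gamma_1\cong K_{1,3}$ on $\{0,x_1,x_2,x_1+x_2\}$ for two elements of a minimal generating set of $\mathfrak{m}$, which is exactly the forbidden subgraph used in the paper. You merely spell out (via independence in $\mathfrak{m}/\mathfrak{m}^2$) the non-adjacency claims that the paper leaves implicit.
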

 \begin{proof}
 Let $\Gamma_U(R)$ be a line graph of some graph. On the contrary, suppose that $R$ is not a principal ideal ring. 
 Let $\mathcal{M}$ be a maximal ideal of $R$  with a minimal generating set $\{x_1, x_2, \ldots, x_n\}$, where ($n\geq2$). Observe that the subgraph $\Gamma_U(A)$, where $A=\{x_1, x_2, x_1+x_2, 0\}$, is isomorphic to $\Gamma_1$, which is not possible. Thus, $R$ is a principal ideal ring.
 Conversely, assume that $R$ is a principal ideal ring. Since $R$ is a local ring, the graph $\Gamma_U(R)$ is complete and so is a line graph of a star graph.
 \end{proof}

 \section{Charecterization of rings $R$ such that $\Gamma_U(R)$ is the complement of a line graphs }\label{section3}
 In this section, we characterize all commutative Artinian rings whose upper ideal relation graph is the complement of a line graph. We begin with a classification of the commutative non-local rings $R$ such that $\Gamma_U(R)$ is the complement of a line graph.


 \begin{theorem}

      Let $R$ be a non-local commutative ring. Then $\Gamma_U(R)$ is the complement of a line graph if and only if $R$ is isomorphic to one of the following six rings: \begin{center}
          $\mathbb{Z}_2\times\mathbb{Z}_2\times\mathbb{Z}_2$,  $\mathbb{Z}_2\times\mathbb{Z}_2$, $\mathbb{Z}_2\times \mathbb{Z}_3$, $\mathbb{Z}_2\times \mathbb{Z}_4$, $\mathbb{Z}_2\times \frac{\mathbb{Z}_2[x]}{(x^2)}$, $\mathbb{Z}_3\times \mathbb{Z}_3$.
      \end{center}

 \begin{proof}
 Since $R$ is a non-local ring, we have  $R \cong R_1 \times R_2 \times \cdots \times R_n$, where $n\geq 2$ and each $R_i$ is a local ring.
     Let $\Gamma_U(R)$ be the complement of a line graph and let $n\geq 4$.
     Consider the set of vertices $A=\{x_1,x_2,x_3,x_4\}$ of $\Gamma_U(R)$, where $x_1=(1,0,0,1,\ldots,0)$, $x_2=(1,0,1,0,\ldots,0)$, $x_3=(1,1,0,0,\ldots,0)$, $x_4=(0,1,1,1,\ldots,0)$. Then the subgraph  $\Gamma_U(A)\cong\overline{\Gamma_1}$, a contradiction. Consequently, either $R \cong R_1 \times R_2 \times R_3$ or $R\cong R_1 \times R_2$. 
     
     First let $R \cong R_1 \times R_2 \times R_3$. Suppose that one of $R_i$,  where $i\in\{1,2,3\}$, is not a field. With no loss of generality, assume that $R_3$ is not a field. Then $|U(R_3)|\ge2$  and $|Z(R_3)|\ge2$. For ${u_1}\in {U(R_3)\setminus\{1\}}$  and $a\in Z(R_3)\setminus\{0\}$, consider the set $S=\{ (0,1,a), (0,1,u_1),(0,1,1),(1,0,u_1)\}$. Then the subgraph  $\Gamma_U(S)\cong\overline{\Gamma_1}$, a contradiction. Consequently, $R\cong R_1 \times R_2 \times R_3$, where each $R_i$ is a field. Further, suppose that the cardinality of one of the fields $R_i$ is at least $3$. With no loss of generality, assume that $|R_3|\ge3$. For ${u}\in {U(R_3)\setminus\{1\}}$, and the set $A=\{(0,0,1), (0,0,u),(0,1,u),(1,1,0)\}$, notice that $\Gamma_U(A)\cong \overline{\Gamma_1}$. Therefore, $\Gamma_U( R_1 \times R_2 \times R_3)$ can not be the complement of a line graph. It implies that $|R_i|\leq2$ for each $i$. Consequently, $R\cong\mathbb{Z}_2\times\mathbb{Z}_2\times\mathbb{Z}_2$.

Next let $R \cong R_1 \times R_2$. To prove our result, we consider the following cases: 
 
 \textbf{Case 1.} \textit{Both $R_1$ and $R_2$ are not fields}. Consider the set $S=\{(u,0),(1,0), (u,a), (0,1)\}$, where $u\in U(R_1)\setminus\{1\}$ and $a\in Z(R_2)\setminus\{0\}$. Then the subgraph $\Gamma_U(S)\cong\overline{\Gamma_1}$, and so $\Gamma_U( R_1 \times R_2 )$ can not be the complement of a line graph.

 \textbf{Case 2.} \textit{One of $R_i$, where $i\in\{1,2\}$, is a field}. With no loss of generality, assume that $R_1$ is a field but $R_2$ is not a field. Let $|R_1|\ge3$. Consider the set $T=\{(1,0),(u,a),(0,1),(u,0)\}$, where $u\in U(R_1)\setminus\{1\}$ and $a\in Z(R_2)\setminus\{0\}$. Then the subgraph $\Gamma_U(T)\cong\overline{\Gamma_1}$. It follows that $\Gamma_U( R_1 \times R_2 )$ can not be the complement of a line graph. Therefore $|R_1|\leq2$. Now let  $|R_2|>4$. Then $|U(R_2)|\geq 4$. Consider the set $X=\{(1,0),(0,u_1),(0,u_2),(0,1)\}$, where $u_1,u_2 \in U(R_2)\setminus\{1\}$. Then, the subgraph $\Gamma_U(X)\cong\overline{\Gamma_1}$. Thus, $\Gamma_U( R_1 \times R_2 )$ is not the complement of a line graph. Consequently, $|R_2|\leq4$, $|R_1|\leq2$ and so either $R\cong \mathbb{Z}_2\times \mathbb{Z}_4$ or $R\cong \mathbb{Z}_2\times \frac{\mathbb{Z}_2[x]}{(x^2)}$.

 \textbf{Case 3.} \textit{ Both $R_1$ and $R_2$ are the fields}. Suppose that $|R_i|\ge4$, for some $i$. With no loss of generality, assume that $|R_2|\geq4$. Consider the set $S=\{(1,0),(0,u_1), (0,u_2),(0,1)\}$,  where $u_1,u_2 \in U(R_2)\setminus\{1\}$. Then the subgraph $\Gamma_U(S)\cong\overline{\Gamma_1}$. Thus, $\Gamma_U( R_1 \times R_2 )$ can not be the complement of a line graph for $|R_i|\ge4$. Therefore, $R \cong R_1 \times R_2$, where $|R_i|\leq3$ for each $i$. Consequently, $R$ is isomorphic to one of the three rings:  $\mathbb{Z}_2\times\mathbb{Z}_2$, $\mathbb{Z}_2\times \mathbb{Z}_3$, $\mathbb{Z}_3\times \mathbb{Z}_3$.

  \begin{figure}[h!]
 			\centering
 			\includegraphics[width=1 \textwidth]{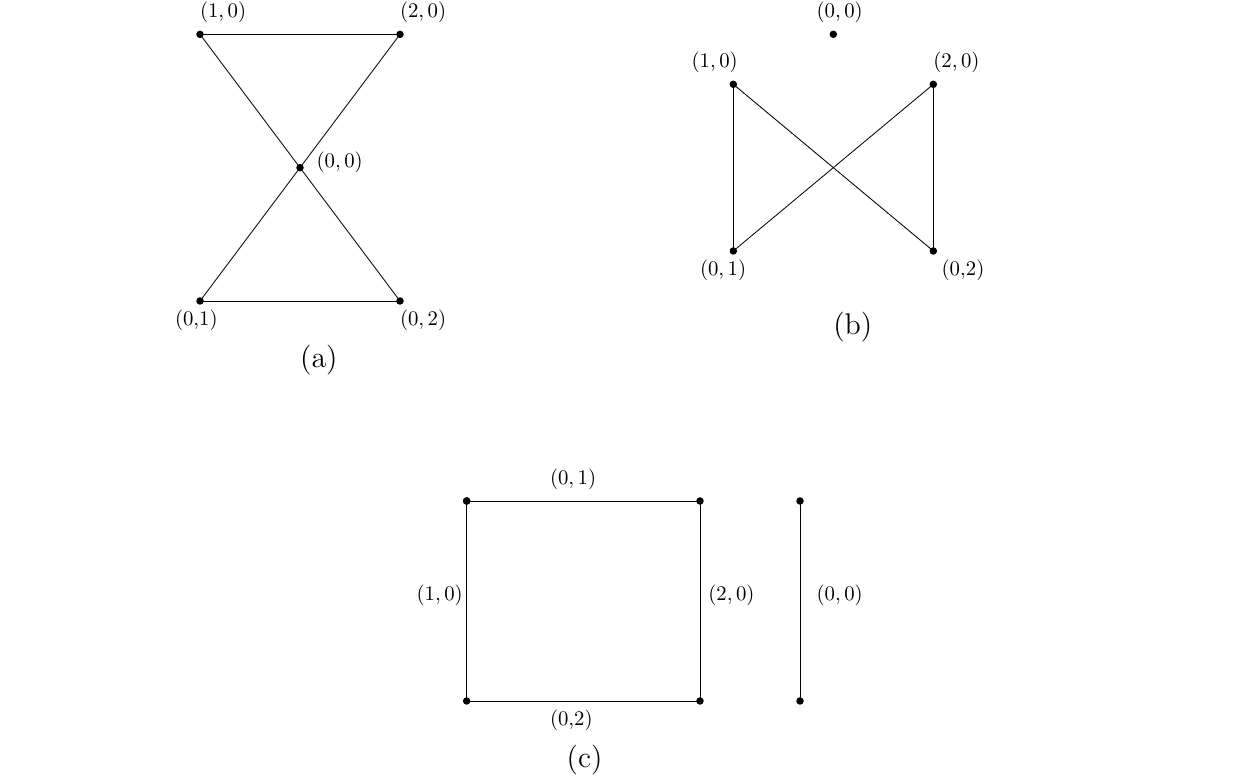}
 			\caption{ $\textbf{(a)}$ $\Gamma_U(\mathbb{Z}_3\times \mathbb{Z}_3)$  $\hspace{0.2cm}$  $\textbf{(b)}$ $\overline{\Gamma_U(\mathbb{Z}_3\times \mathbb{Z}_3)}$  $\hspace{0.2cm}$ $\textbf{(c)}$  $H_1$}
    \label{fig-5_graphs}
 \end{figure}

    \begin{figure}[h!]
			\centering
			\includegraphics[width=1 \textwidth]{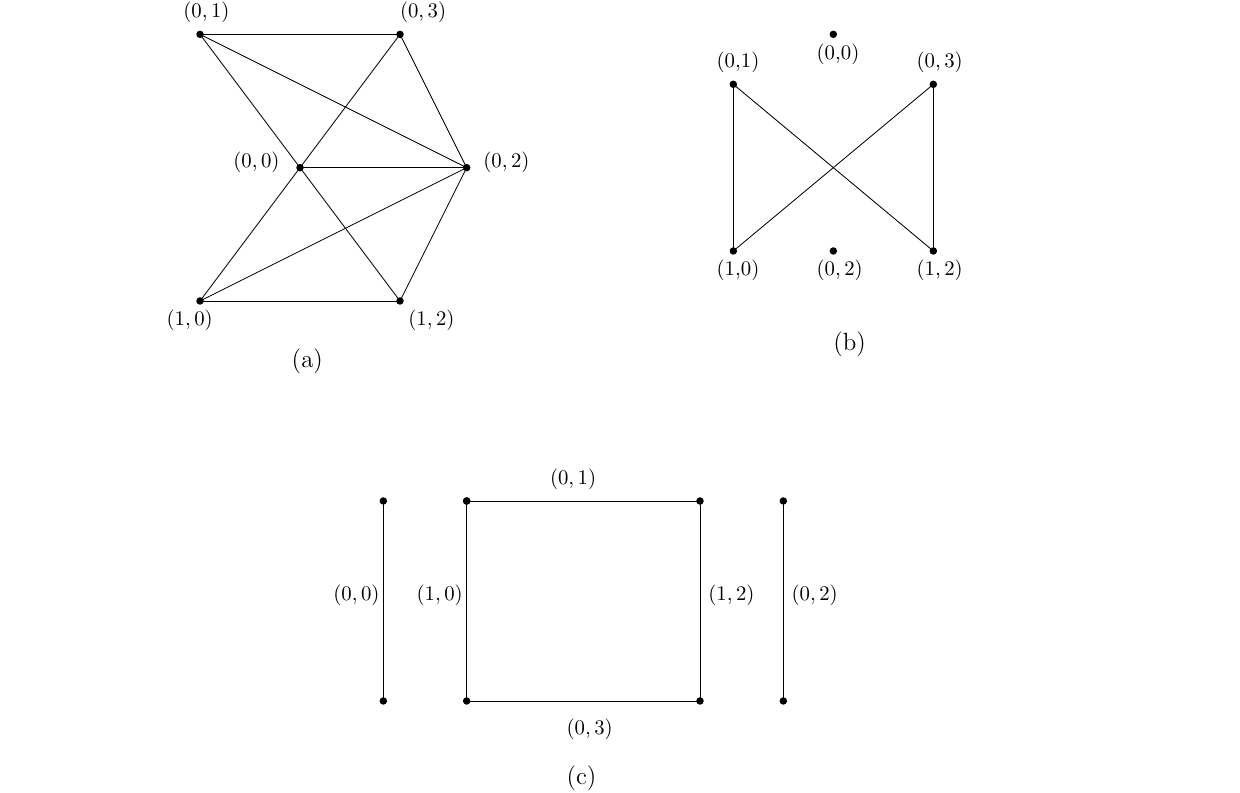}
 			\caption{ $\textbf{(a)}$ $\Gamma_U(\mathbb{Z}_2\times \mathbb{Z}_4)$ $\hspace{0.2cm}$  $\textbf{(b)}$  $\overline{\Gamma_U(\mathbb{Z}_2\times \mathbb{Z}_4)}$ $\hspace{.2cm}$ $\textbf{(c)}$ $H_2$ }
    \label{fig_6_graph}
 \end{figure}
  \begin{figure}[h!]
 			\centering
 			\includegraphics[width=1 \textwidth]{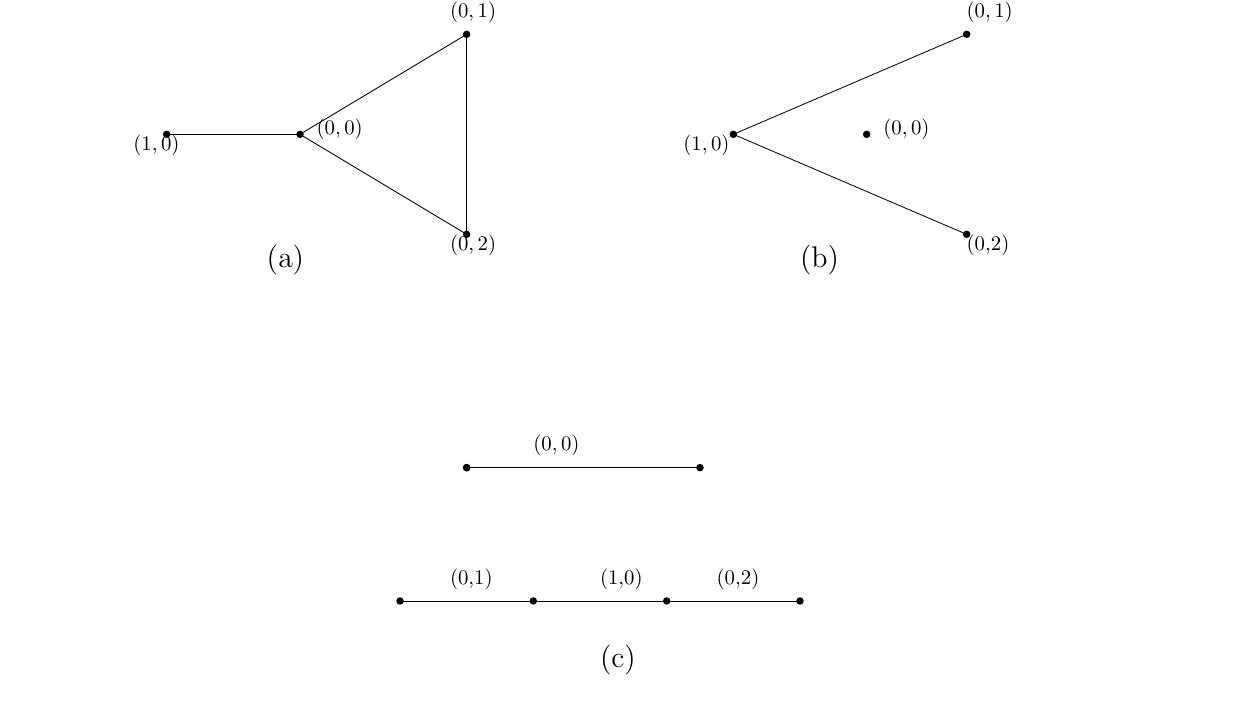}
 			\caption{ $\textbf{(a)}$ $\Gamma_U(\mathbb{Z}_2\times \mathbb{Z}_3)$ $\hspace{0.2cm}$  $\textbf{(b)}$  $\overline{\Gamma_U(\mathbb{Z}_2\times \mathbb{Z}_3)}$ $\hspace{.2cm}$ $\textbf{(c)}$ $H_3$ }
    \label{fig-7_graphs}
 \end{figure}

 Conversely, if $R\cong\mathbb{Z}_2\times\mathbb{Z}_2\times\mathbb{Z}_2$, then by Figure 4, note that  $\overline{\Gamma_U( \mathbb{Z}_2 \times \mathbb{Z}_2 \times \mathbb{Z}_2)}=L(H)$.
 \begin{figure}[h!]
			\centering
 			\includegraphics[width=1 \textwidth]{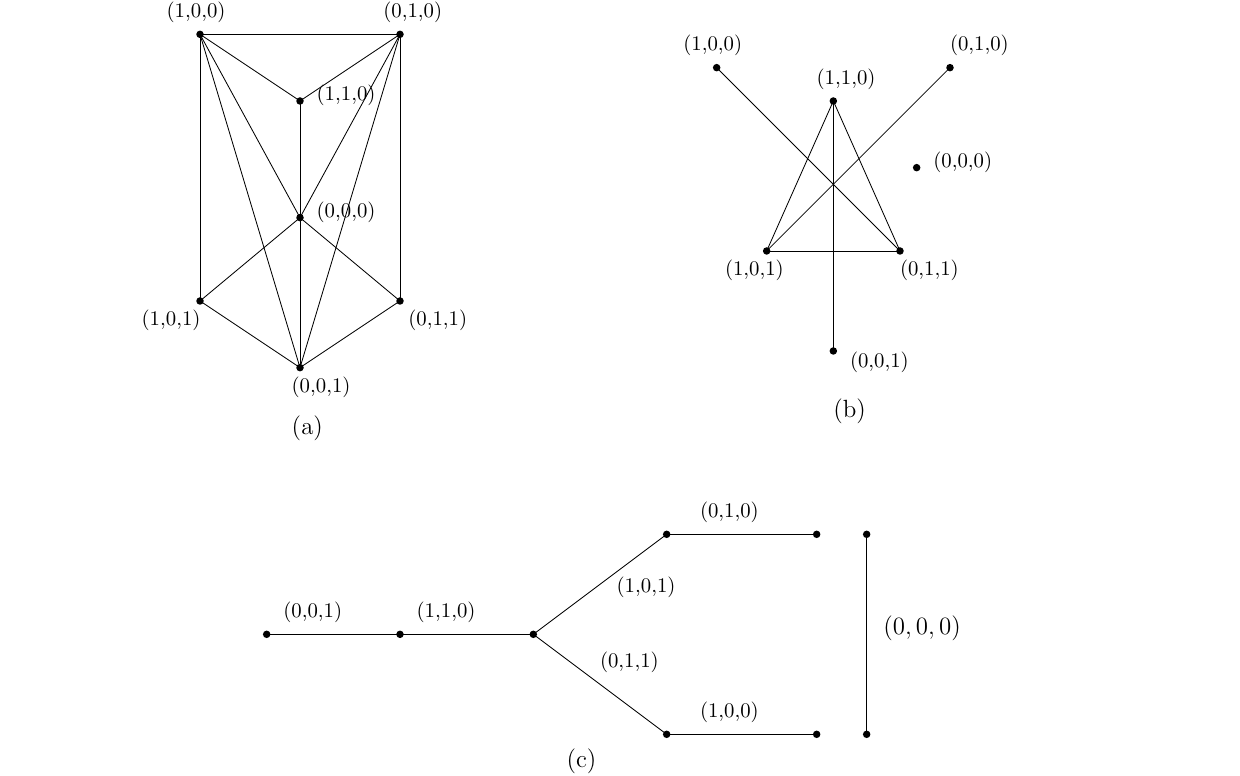}
 			\caption{$\textbf{(a)}$   $\Gamma_U(\mathbb{Z}_2\times \mathbb{Z}_2\times \mathbb{Z}_2)$ $\hspace{0.2cm}$ $\textbf{(b)}$ $\overline{\Gamma_U(\mathbb{Z}_2\times \mathbb{Z}_2\times \mathbb{Z}_2)}$  $\hspace{0.2cm}$  $\textbf{(c)}$  $H$ }
    \label{fig_4-graphs}
 \end{figure}
 By Figure 5, note  that $\Gamma_U(\mathbb{Z}_3\times \mathbb{Z}_3)=\overline{L(H_1)}$. Note that  $\Gamma_U(\mathbb{Z}_2\times \mathbb{Z}_4)$=$\Gamma_U(\mathbb{Z}_2\times \frac{\mathbb{Z}_2[x]}{(x^2)})$. By Figure 6, we obtain  $\Gamma_U(\mathbb{Z}_2\times \mathbb{Z}_4)=\overline{L(H_2)}$. By  Figure 7, observe that $\Gamma_U(\mathbb{Z}_2\times \mathbb{Z}_3)=\overline{L(H_3)}$. Also,  $\Gamma_U(\mathbb{Z}_2\times \mathbb{Z}_2)=P_3= \overline{L(P_2\bigcup P_3)}$ 
    \end{proof}
    \end{theorem}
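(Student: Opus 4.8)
The plan is to combine the Artinian structure theorem with Theorem \ref{complement}: writing $R \cong R_1 \times \cdots \times R_n$ with $n \geq 2$ and every $R_i$ local, $\Gamma_U(R)$ is the complement of a line graph exactly when it contains none of the nine graphs $\overline{\Gamma_i}$ as an induced subgraph. In fact the only forbidden subgraph needed for the forward direction is $\overline{\Gamma_1}$, the disjoint union of a triangle with an isolated vertex, so the recurring task is to locate, in each ring that must be excluded, three pairwise-adjacent vertices of $\Gamma_U(R)$ together with a fourth vertex adjacent to none of them. The basic adjacency fact I will use throughout is that in a product $(x) \subseteq (z)$ holds iff $(x_i) \subseteq (z_i)$ for all $i$, and $z$ is a non-unit iff some $z_i$ is a non-unit; consequently two coordinatewise $0/1$-vectors are adjacent in $\Gamma_U(R)$ precisely when they share a zero coordinate.

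First I would bound the number of factors. For $n \geq 4$, the $0/1$-vectors supported on $\{1,2\}$, $\{1,3\}$, $\{1,4\}$ are pairwise adjacent (any two of these supports have union of size $3<n$), while the vector that is $0$ only in its first coordinate is adjacent to none of them; this induced $\overline{\Gamma_1}$ forces $n \leq 3$. For $n = 3$ I would next show that if some $R_i$ is not a field then, using a unit $u \neq 1$ and a nonzero zero-divisor $a$ of $R_i$, one again produces an induced $\overline{\Gamma_1}$ (from vertices of the shape $(0,1,a),(0,1,u),(0,1,1),(1,0,u)$), and that if some field factor has order $\geq 3$ a similar witness appears; hence in the three-factor case $R \cong \mathbb{Z}_2 \times \mathbb{Z}_2 \times \mathbb{Z}_2$.

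The heart of the argument is $n = 2$, which I would split into ``$R_1,R_2$ both non-fields'', ``exactly one of $R_1,R_2$ a field'', and ``$R_1,R_2$ both fields''. The uniform idea is once more to assemble an induced $\overline{\Gamma_1}$ from elements built out of $0$, $1$, a nontrivial unit $u$, and a nonzero zero-divisor $a$ of whichever factor is ``too large''. This eliminates the both-non-field case outright; in the mixed case it forces the field factor to be $\mathbb{Z}_2$ and the other factor to have order at most $4$, leaving $\mathbb{Z}_2 \times \mathbb{Z}_4$ and $\mathbb{Z}_2 \times \mathbb{Z}_2[x]/(x^2)$ as the only non-field local rings of order $\leq 4$; and in the both-fields case it forces each factor to have order at most $3$, leaving $\mathbb{Z}_2 \times \mathbb{Z}_2$, $\mathbb{Z}_2 \times \mathbb{Z}_3$, $\mathbb{Z}_3 \times \mathbb{Z}_3$.

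For the converse I would verify the six rings one at a time. Using $\Gamma_U(F_1 \times F_2) = K_1 \vee (K_{|F_1|-1} \cup K_{|F_2|-1})$ together with short direct computations of $\Gamma_U(\mathbb{Z}_2^3)$ and of $\Gamma_U(\mathbb{Z}_2 \times \mathbb{Z}_4) = \Gamma_U(\mathbb{Z}_2 \times \mathbb{Z}_2[x]/(x^2))$, one exhibits in each case an explicit small graph $H$ with $\overline{L(H)} = \Gamma_U(R)$ (equivalently $L(H) = \overline{\Gamma_U(R)}$); for instance $\Gamma_U(\mathbb{Z}_2 \times \mathbb{Z}_2) = P_3 = \overline{L(P_2 \cup P_3)}$. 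I expect the main obstacle to be the $n = 2$ analysis: the non-field local factors must be handled with care, since there the adjacency in $\Gamma_U(R)$ is governed by the ideal lattice of $R_i$ rather than by supports, so the forbidden-subgraph witnesses have to be chosen using the specific elements $1,u,a$ above; once the case division and these witnesses are fixed, the remaining verifications are finite and routine.
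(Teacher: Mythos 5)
Your proposal follows essentially the same route as the paper: reduce to $n\leq 3$ local factors via the Artinian structure theorem, eliminate everything except the six listed rings by exhibiting induced copies of $\overline{\Gamma_1}$ (a triangle plus an isolated vertex) built from $0$, $1$, a nontrivial unit and a nonzero zero-divisor, and then verify the six survivors by displaying explicit graphs $H$ with $\Gamma_U(R)=\overline{L(H)}$. The one point where you improve on the paper is the $n\geq 4$ step: your fourth vertex $(0,1,1,\ldots,1)$ is non-adjacent to the triangle for every $n\geq 4$, whereas the paper's choice $(0,1,1,1,0,\ldots,0)$ shares a trailing zero coordinate with the other three vertices when $n\geq 5$ and so only works verbatim for $n=4$.
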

 Now, we investigate local Artinian  rings $R$ such that $\Gamma_U(R)$ is the complement of a line graph.

 \begin{lemma}
     Let $R$ be a principal local ring. Then $\Gamma_U(R)$ is the complement of a line graph of some graph.
 \end{lemma}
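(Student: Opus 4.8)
The plan is to analyze the structure of $\Gamma_U(R)$ when $R$ is a principal local ring and show directly that it cannot contain any of the nine forbidden induced subgraphs $\overline{\Gamma_i}$ from Theorem \ref{complement}. First I would recall that in a principal local ring $R$ with maximal ideal $\mathcal{M}=(\pi)$, every ideal is of the form $(\pi^k)$, and the non-unit elements are exactly the elements of $\mathcal{M}$. For two non-units $x,y$, the condition that $(x),(y)\subseteq(z)$ for some non-unit $z$ is satisfied trivially by taking $z=\pi$ (or any generator of the smallest power dividing both), so in fact $\Gamma_U(R)$ is a \emph{complete} graph on the vertex set $\mathcal{M}$. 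This should be the key observation: for a principal local ring, $\Gamma_U(R)\cong K_m$ where $m=|\mathcal{M}|$.

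Given that $\Gamma_U(R)$ is complete, the next step is to observe that $\overline{\Gamma_U(R)}$ is an edgeless graph (the complement of $K_m$ is $\overline{K_m}=mK_1$). An edgeless graph is trivially a line graph — it is $L(\Gamma)$ where $\Gamma$ is a disjoint union of $m$ copies of $K_2$ (or more precisely, any graph with $m$ edges no two of which share a vertex, i.e. a perfect matching on $2m$ vertices). Hence $\overline{\Gamma_U(R)}$ is a line graph, which is exactly the statement that $\Gamma_U(R)$ is the complement of a line graph. Equivalently, one could verify that the empty graph contains none of the $\overline{\Gamma_i}$, each of which has at least one edge, but the explicit construction $\overline{\Gamma_U(R)} = L(mK_2)$ is cleaner and parallels the treatment of the complete case in the earlier local-ring theorem (where $K_n = L(K_{1,n})$).

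I would also handle the degenerate small cases explicitly: if $\mathcal{M}=\{0\}$ then $R$ is a field, $\Gamma_U(R)$ has a single vertex, and the complement is a single vertex, which is $L(K_2)$; so the claim holds vacuously. For $|\mathcal{M}|=1$ or $2$ the argument above still applies without change. The main (and essentially only) obstacle is justifying the claim that $\Gamma_U(R)$ is complete for a principal local ring; once that is in hand the rest is immediate. To see completeness: take any two non-units $x=u_1\pi^a$ and $y=u_2\pi^b$ with $u_1,u_2$ units and $a,b\ge 1$; then $(x)=(\pi^a)$ and $(y)=(\pi^b)$ are both contained in $(\pi)$, and $\pi$ is a non-unit, so $x\sim y$ in $\Gamma_U(R)$. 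Thus every pair of distinct vertices is adjacent, and $\Gamma_U(R)=K_{|\mathcal{M}|}$, completing the proof.
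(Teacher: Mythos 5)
Your proof is correct and follows the same route as the paper: observe that for a principal local ring every non-unit lies in the principal maximal ideal, so $\Gamma_U(R)$ is complete, and hence equals $\overline{L(mK_2)}$. You simply spell out the two steps (completeness and the identification of the edgeless complement as a line graph) that the paper states in one line.
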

 \begin{proof}
     Since $R$ is a principal local ring, it follows that  $\Gamma_U(R)$ is a complete graph and so $\Gamma_U(R)=\overline{L(nK_2)}$.
     \end{proof}

 \begin{lemma}
 Let $R$ be a non-principal local ring such that $ char(R)=p^k$, where $p>2$ is a prime. Then $\Gamma_U(R)$ is not the complement of a line graph of some graph.
 \end{lemma}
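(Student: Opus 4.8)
The plan is to exhibit, inside $\Gamma_U(R)$, an induced copy of one of the nine forbidden subgraphs $\overline{\Gamma_i}$ of Theorem~\ref{complement}. The naive candidate $\overline{\Gamma_1}=\overline{K_{1,3}}=K_3\cup K_1$ (a triangle plus an isolated vertex), used throughout Section~\ref{section3} for non-local rings, is \emph{not} always available here: when $R/\mathcal{M}\cong\mathbb{F}_3$ and $\mathcal{M}^2=0$, every triangle of $\Gamma_U(R)$ passes through the vertex $0$, which is adjacent to all others, so no induced $K_3\cup K_1$ exists. Instead I will produce an induced $K_2\cup 3K_1$, which is precisely $\overline{\Gamma_i}$ for the Beineke graph $\Gamma_i\cong K_5-e$ (the graph on five vertices with every edge but one). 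Two standing facts set up the construction. First, since $char(R)=p^k$ with $p>2$, the element $p\cdot 1_R$ is nilpotent, hence a non-unit, hence lies in $\mathcal{M}$; thus $R/\mathcal{M}$ has characteristic $p$, so $2$ is a unit of $R$ and the scalars $0,1,2$ are distinct in $R/\mathcal{M}$. Second, since $R$ is local and \emph{not} principal, Nakayama's lemma gives $\dim_{R/\mathcal{M}}\mathcal{M}/\mathcal{M}^2\geq 2$, so we may fix $x,y\in\mathcal{M}$ whose images $\overline{x},\overline{y}$ are linearly independent in $\mathcal{M}/\mathcal{M}^2$.

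Now consider the five non-units $A=\{x,\,2x,\,y,\,x+y,\,x+2y\}$. Using that $2$ is a unit, that $0,1,2$ are distinct modulo $\mathcal{M}$, and the independence of $\overline{x},\overline{y}$, one checks that the elements of $A$ are pairwise distinct and all nonzero. The single adjacency inside $A$ is $x\sim 2x$: both lie in the principal ideal $(x)$, and $x$ is a non-unit. For every other pair $\{a,b\}\subseteq A$ I claim $a\not\sim b$, by a uniform mechanism. Suppose $a,b\in(z)$ for some non-unit $z$. Then, taking a suitable $R$-linear combination of $a$ and $b$ and using that $2$ is a unit to strip off coefficients (for instance $x=2^{-1}(2x)$, $y=(x+y)-x$, $2y=(x+2y)-x$, $y=(x+2y)-(x+y)$, and the analogous identities for the remaining pairs), one deduces that $x\in(z)$ and $y\in(z)$. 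But then $\overline{x}$ and $\overline{y}$ both lie in the image of $(z)$ in $\mathcal{M}/\mathcal{M}^2$, which is spanned over $R/\mathcal{M}$ by $\overline{z}$ and is therefore at most one-dimensional, contradicting the linear independence of $\overline{x},\overline{y}$. Running this through each of the nine pairs of $A$ other than $\{x,2x\}$ (the verifications are short and entirely parallel) shows that $\Gamma_U(A)$ is a single edge together with three isolated vertices, i.e.\ $\Gamma_U(A)\cong K_2\cup 3K_1$.

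Finally, $K_2\cup 3K_1$ is the complement of $K_5-e$, and $K_5-e$ is one of the nine Beineke graphs in Figure~\ref{forbiddengraphs}: it is not a line graph (a short degree/edge count rules out any $H$ with $L(H)=K_5-e$), whereas deleting any one of its vertices yields $K_4=L(K_{1,4})$ or $K_4-e=L(\text{paw})$, both of which are line graphs, so $K_5-e$ is a minimal non-line graph. Hence $\Gamma_U(R)$ contains, as an induced subgraph, $\overline{\Gamma_i}$ for the index $i$ with $\Gamma_i\cong K_5-e$, and by Theorem~\ref{complement} it is not the complement of a line graph. The part requiring care is the bookkeeping: one must simultaneously confirm that the five chosen elements are distinct and that each of the nine prospective non-edges genuinely fails to be an edge. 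This is exactly why the configuration $\{x,2x,y,x+y,x+2y\}$ is chosen as it is -- every pair except $\{x,2x\}$ already "detects" both $x$ and $y$ inside any common principal ideal, which makes the non-adjacency arguments identical in form and lets one avoid splitting into cases according to $|R/\mathcal{M}|$.
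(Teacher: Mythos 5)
Your proof is correct and follows essentially the same route as the paper: the paper also takes two minimal generators $x_1,x_2$ of $\mathcal{M}$ and the five non-units $\{x_1,2x_1,x_1+x_2,x_2+2x_1,x_2\}$, whose induced subgraph is the single edge plus three isolated vertices, i.e.\ the complement of the Beineke graph $K_5-e$ (the paper's $\overline{\Gamma_3}$), and then invokes Theorem~\ref{complement}. Your set $\{x,2x,y,x+y,x+2y\}$ and the explicit identification of $K_5-e$ as a minimal non-line graph are just minor variants of the same argument.
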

 \begin{proof}
  Let $\mathcal{M}$ be the maximal ideal of $R$ and let $\{ x_1,x_2,\ldots, x_n\}$, where $n \ge2 $, be its minimal generating set. Since $\textnormal{char}(R)= p^k$, there exists $x_i\in \mathcal{M}$ such that $2x_i\neq0$. With no loss of generality, assume that $2x_1\neq0$. Observe that for the set $S=\{x_1, 2x_1,x_1+x_2,x_2+2x_1,x_2 \}$, we have $\Gamma_U(S)\cong\overline{\Gamma_3}$. Therefore, by Theorem \ref{complement}, $\Gamma_U(R)$ can not be the complement of a line graph of some graph.
 \end{proof}

 \begin{lemma}
     Let $R$ be a non-principal local ring of characteristic $2^k (k\ge3)$ or $ 0$. Then $\Gamma_U(R)$ is not the complement of a line graph of some graph.
     \end{lemma}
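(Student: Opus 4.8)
The proof follows the pattern of the preceding lemma: we exhibit inside $\Gamma_U(R)$ an induced subgraph isomorphic to one of the nine forbidden graphs $\overline{\Gamma_i}$ of Theorem \ref{complement}. Throughout, $\mathcal M$ denotes the maximal ideal of $R$; since $R$ is a non-principal Artinian local ring, $\mathcal M$ has a finite minimal generating set $\{x_1,\dots,x_n\}$ with $n\ge 2$, whose classes form a basis of $\mathcal M/\mathcal M^2$. We use repeatedly the elementary fact that if $a\in\mathcal M\setminus\mathcal M^2$ and $a\in(z)$ for a non-unit $z$, then $(z)=(a)$ (write $a=rz$ and reduce modulo $\mathcal M^2$ to see that $r$ is a unit); hence for such an $a$, one has $a\sim b$ in $\Gamma_U(R)$ if and only if $b\in(a)$.

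Case $\operatorname{char}(R)=0$. Here $2$ is not nilpotent in $R$, whereas $\mathcal M$ is nilpotent, so $2\notin\mathcal M$; that is, $2$ is a unit. In particular $2x_1\neq 0$, and the argument of the preceding lemma carries over verbatim: for $S=\{x_1,\,2x_1,\,x_1+x_2,\,x_2+2x_1,\,x_2\}$ the only edge of $\Gamma_U(S)$ is $x_1\sim 2x_1$, since any non-unit $z$ whose principal ideal contains two of the remaining four vertices must contain both $x_1$ and $x_2$ (using that $2$ is a unit to replace $2x_1$ by $x_1$), contradicting the independence of $\bar x_1,\bar x_2$ in $\mathcal M/\mathcal M^2$. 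Thus $\Gamma_U(S)\cong K_2\sqcup 3K_1\cong\overline{\Gamma_3}$, and Theorem \ref{complement} applies.

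Case $\operatorname{char}(R)=2^{k}$ with $k\ge 3$. Now $2\in\mathcal M$, but $2\neq 0$ and $4=2^2\neq 0$ (as $4$ divides $2^{k-1}\neq 0$), and $2,4,6$ are three pairwise distinct elements of the principal ideal $(2)$, so $\{2,4,6\}$ induces a triangle in $\Gamma_U(R)$; moreover $(6)=(2)\supsetneq(4)\supsetneq(0)$. It therefore suffices to find a non-unit $w\in\mathcal M\setminus\mathcal M^2$ with $4\notin(w)$: for such $w$ we have $w\not\sim 4$ by the fact above, and if some non-unit $z$ contained $w$ together with $2$ (or $6$), then $z$ would also contain $4=2\cdot 2$, forcing $4\in(w)$; hence $w\not\sim 2$ and $w\not\sim 6$ as well, so $\{2,4,6,w\}$ induces $K_3\sqcup K_1\cong\overline{\Gamma_1}$ and Theorem \ref{complement} applies. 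Candidates for $w$ are abundant --- if $2\notin\mathcal M^2$, complete $\{2\}$ to a minimal generating set and take $w$ to be one of the other generators (or a translate of it by an element of $(2)$); if $2\in\mathcal M^2$, then $(2)\subseteq\mathcal M^2$ and every minimal generator $w$ has $2\notin(w)$ --- so the only point to settle is that some such $w$ has $4\notin(w)$.

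Proving that $w$ with $4\notin(w)$ exists is the main obstacle. If, on the contrary, $4\in(w)$ for every $w\in\mathcal M\setminus\mathcal M^2$, then $(4)$ would be contained in all principal ideals generated by elements outside $\mathcal M^2$; morally this is impossible, because in the associated graded ring $\operatorname{gr}(R)=\bigoplus_i\mathcal M^i/\mathcal M^{i+1}$ the nonzero image of $4$ would then be divisible by three pairwise-independent linear forms $\bar x,\bar y,\overline{x+y}$ from $\mathcal M/\mathcal M^2$. I expect the rigorous argument to be a short case analysis on the position of $2$ --- whether $2\in\mathcal M^2$, and, if not, whether $4$ lies in the principal ideal of a fixed second generator --- in each branch either producing $w$ with $4\notin(w)$ and hence $\overline{\Gamma_1}$, or, in the residual branch, exhibiting instead a five- or six-vertex forbidden subgraph $\overline{\Gamma_i}$ built from $2$, $4$ and two minimal generators of $\mathcal M$. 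All the remaining verifications reduce to checking containments of principal ideals and are routine.
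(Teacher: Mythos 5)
Your characteristic-$0$ case is correct, and in fact more careful than the paper's own treatment (the paper uses the vertices $2I_R,4I_R,6I_R$ there, although in characteristic $0$ the element $2$ is a unit of an Artinian local ring, exactly as you observe; your fallback to the five-point set of the preceding lemma does give $K_2\cup 3K_1\cong\overline{\Gamma_3}$). The problem is the case $char(R)=2^k$, $k\ge 3$, where you yourself flag the unproved step: the existence of $w\in\mathcal{M}\setminus\mathcal{M}^2$ with $4\notin(w)$. This is a genuine gap, and the missing statement is in fact false in general. Consider $R=\mathbb{Z}_8[y]/(2y,\,y^2-4)$: a local ring of characteristic $8$ with $\mathcal{M}=(2,y)$, $\mathcal{M}^2=\{0,4\}$, and $\mathcal{M}$ not principal ($|\mathcal{M}|=8$ while every principal ideal has at most $4$ elements). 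Here $\mathcal{M}\setminus\mathcal{M}^2=\{2,6,y,4+y,2+y,6+y\}$, and $(2)=(6)=\{0,2,4,6\}$, $(y)=(4+y)=\{0,4,y,4+y\}$ (note $y\cdot y=4$), $(2+y)=(6+y)=\{0,4,2+y,6+y\}$ (note $2(2+y)=4$) all contain $4$, so no admissible $w$ exists.

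Worse, in this ring the ``residual branch'' you hope to fill cannot exist: the three ideals above are the maximal principal ideals, so $0$ and $4$ are adjacent to every vertex while the pairs $\{2,6\}$, $\{y,4+y\}$, $\{2+y,6+y\}$ carry the only other edges; thus $\Gamma_U(R)\cong K_2\vee 3K_2$, whose complement is $2K_1\cup K_{2,2,2}=L(2K_2\cup K_4)$, a line graph. Hence $\Gamma_U(R)$ contains no induced $\overline{\Gamma_i}$ at all, so no choice of four, five or six vertices can rescue the argument for this $R$. Note that the same example defeats the paper's own proof, which takes the set $\{2I_R,4I_R,6I_R,x_i\}$ without verifying $x_i\not\sim 4$ (here the generator $x_i=y$ satisfies $4\in(y)$, so the induced graph is a triangle with a pendant edge, not $K_3\cup K_1$), and it appears to be an outright counterexample to the lemma as stated. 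So your conditional reasoning (triangle on $2,4,6$ plus a generator $w$ with $4\notin(w)$, using that $a\in(z)$ with $a\notin\mathcal{M}^2$ forces $(z)=(a)$) is sound, but the existence step is not merely unproven --- it fails, and the lemma itself needs either an additional hypothesis or a different proof strategy for $k\ge3$.
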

 \begin{proof}
 Let $\{ x_1,x_2,\ldots, x_n\}$, where $n \ge2 $, be the minimal generating set of the maximal ideal $\mathcal{M}$ of $R$. Since $char=2^k$ (or $0$), then $R$ contains a subring $S=\{kI_R  \mid  k\in \mathbb Z\}$, where $I_R$ be the unity of ring $R$, which is isomorphic to the ring $\mathbb{Z}_{2^k}$ (or $\mathbb Z$). Let $J$ be a maximal ideal of $S$ such that $J=(2I_R)$. Consider $i\in\{1,2,\ldots,n\}$ such that $x_i\neq 2I_R$. Note that for the set  $A=\{2I_R,4I_R,6I_R,x_i\}$, we have  $\Gamma_U(A)\cong\overline{\Gamma_1}$. Therefore, $\Gamma_U(R)$ can not be the complement of a line graph.
     \end{proof}

 \begin{theorem}\label{char=4}
      Let $R$ be a non-principal local ring such that  $char(R)=4$ and let $\mathcal{M}$ be the maximal ideal of $R$ with the minimal generating set $\{x_1,x_2,\ldots,x_n\}$, where $n\geq2$. Then $\Gamma_U(R)$ is the complement of a line graph of some graph if and only if the following hold.
     \begin{enumerate}[\rm(i)]
         \item $\frac{R}{\mathcal{M}}\cong \mathbb{Z}_2$,
         \item $\mathcal{M}^2=\{0\}$,
         \item $2x_i=0$ for all $i$.
     \end{enumerate}
 \end{theorem}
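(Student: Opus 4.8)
The plan is to prove both directions by exhibiting forbidden subgraphs $\overline{\Gamma_i}$ for the ``only if'' part and by computing the structure of $\Gamma_U(R)$ explicitly under (i)--(iii) for the ``if'' part. Throughout, write $I=I_R$ for the unity, so that $\mathrm{char}(R)=4$ means $4I=0$ but $2I\neq 0$, and note $2I\in\mathcal M$ since $R/\mathcal M$ has characteristic $2$ (as $4=2^2$ forces the residue field to have characteristic $2$).

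\textbf{The ``only if'' direction.} Assume $\Gamma_U(R)$ is the complement of a line graph; I must derive (i), (ii), (iii). For (i): if $R/\mathcal M\not\cong\mathbb Z_2$ then $|U(R)\bmod\mathcal M|\geq 2$, so there is a unit $u$ with $u-I\in U(R)$ (equivalently $u\not\equiv I\pmod{\mathcal M}$); I would then locate a $4$-vertex set among suitable combinations of $2I$, a generator $x_1$, and shifts by $u$ whose induced subgraph is $\overline{\Gamma_1}$ (an independent set of size $3$ plus one extra vertex adjacent to exactly one of them), mimicking the non-local arguments above where ``the field is too big'' was excluded. For (ii): suppose $\mathcal M^2\neq\{0\}$, so some product $x_ix_j\neq 0$; then $(x_ix_j)\subsetneq(x_i)$ gives extra adjacencies, and I would build a set like $\{x_i, x_ix_j, x_i+x_j, x_j, 2I\}$ (or a size-$4$ analogue) realizing $\overline{\Gamma_1}$ or $\overline{\Gamma_3}$. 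For (iii): if $2x_i\neq 0$ for some $i$, then $2x_i\in\mathcal M^2$ is a nonzero element (using that $2I\in\mathcal M$), which is essentially the negation of (ii) in disguise, or one argues directly with the set $\{x_i,2x_i,x_i+x_j,x_j+2x_i,x_j\}\cong\overline{\Gamma_3}$ exactly as in the preceding lemma for odd characteristic --- the same configuration works here once $2x_i\neq 0$ is available. In each case the conclusion follows from Theorem~\ref{complement}.

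\textbf{The ``if'' direction.} Assume (i), (ii), (iii). Then $\mathcal M$ is a vector space over $\mathbb Z_2=R/\mathcal M$ with $\mathcal M^2=0$, and every nonzero element of $\mathcal M$ generates a principal ideal that is a $1$-dimensional subspace, i.e.\ $(x)=\{0,x\}$ for all $0\neq x\in\mathcal M$. Hence for distinct nonzero $x,y\in\mathcal M$, $(x)$ and $(y)$ are contained in a common proper principal ideal $(z)$ iff $(x)=(y)=(z)=\{0,z\}$, which is impossible for $x\neq y$ unless one of them is $0$; so no two distinct nonzero elements of $\mathcal M$ are adjacent, while $0$ is adjacent to everything. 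Therefore $\Gamma_U(R)\cong K_1\vee \overline{K_{m}}$, a star $K_{1,m}$ where $m=|\mathcal M|-1$. Its complement is $K_1\cup K_m$, which is trivially a line graph (e.g.\ of $K_{1,m}$ together with an isolated edge, or directly $K_m=L(K_{1,m})$ so $K_1\cup K_m=L(K_2\cup K_{1,m})$ after adjusting). Thus $\Gamma_U(R)=\overline{L(\,\cdot\,)}$ and we are done.

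\textbf{Main obstacle.} The routine part is the ``if'' direction, which collapses to recognizing $\Gamma_U(R)$ as a star. The delicate part is the ``only if'' direction: I must be careful that the forbidden configurations I write down genuinely sit inside $\Gamma_U(R)$ as \emph{induced} subgraphs, which requires knowing precisely which pairs among elements like $2I, x_i, x_i+x_j, u\cdot(\text{stuff})$ are adjacent --- and adjacency in $\Gamma_U$ is about containment of principal ideals in a \emph{common} proper principal ideal, so I need to rule out spurious common upper bounds (the only candidate upper bounds are generators of $\mathcal M$ up to units, so this is controlled but must be checked). I expect condition (ii) to be the fiddliest, since the relevant witness $z$ with $(x_i),(x_ix_j)\subseteq(z)$ exists and could create unwanted edges; picking the vertex set so the induced graph is exactly $\overline{\Gamma_1}$ (rather than something with an extra edge, hence not forbidden) is where the care goes.
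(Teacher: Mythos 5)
Your ``if'' direction is correct and is essentially the paper's argument: conditions (i) and (ii) force $(x)=\{0,x\}$ for every $x\in\mathcal M$, so $\Gamma_U(R)$ is the star $K_{1,|\mathcal M|-1}$, whose complement $K_1\cup K_{|\mathcal M|-1}=L\bigl(K_2\cup K_{1,|\mathcal M|-1}\bigr)$ is a line graph. Your observation that (iii) is forced by (i) and (ii) (since $2I_R\in\mathcal M$ and hence $2x_i\in\mathcal M^2$) is also correct.

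The ``only if'' direction, however, is a plan rather than a proof, and the gap is exactly where you yourself locate the difficulty. For (i) you say you ``would locate a $4$-vertex set'' inducing $\overline{\Gamma_1}$, and for (ii) you propose a set ``or a size-$4$ analogue'' realizing ``$\overline{\Gamma_1}$ or $\overline{\Gamma_3}$'', without specifying the vertices or verifying a single adjacency; since the whole content of this direction is checking that a specific induced subgraph is forbidden (no missing edges, no extra edges), nothing has been established. The paper handles all three conditions with explicit five-vertex sets inducing $\overline{\Gamma_3}$: for (i), $\{x_1,ux_1,x_2,x_1+x_2,x_2+ux_1\}$ with $u-1\notin\mathcal M$; for (ii) it splits into the cases $x_1^2\neq 0$ (set $\{x_1,x_1^2,x_1+x_2,x_2+x_1^2,x_2\}$) and $x_1x_2\neq 0$ (set $\{x_1,x_1+x_1x_2,x_2,2+x_1,x_1+x_2\}$, where $\mathrm{char}(R)=4$ guarantees $2+x_1\neq x_1$) --- a case distinction your sketch does not address; only your set for (iii) coincides with the paper's. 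A further sign of trouble is that your parenthetical description of $\overline{\Gamma_1}$ is wrong: $\Gamma_1=K_{1,3}$, so $\overline{\Gamma_1}=K_3\cup K_1$, not ``an independent set of size $3$ plus one vertex adjacent to exactly one of them''; any construction aimed at the graph you describe would not produce a forbidden subgraph at all. To complete the proof you must write down the candidate vertex sets, compute every principal ideal involved, and confirm the induced graphs edge by edge.
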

 \begin{proof}
     Suppose that $\Gamma_U(R)$ is the complement of a line graph. 
 First let  $|{\frac{R}{\mathcal{M}}}|\geq3$. Consider the set  $A=\{x_1,ux_1,x_2,x_1+x_2,x_2+ux_1\}$, where $u\in U(R)$ such that $1-u\not\in \mathcal{M}$. Then the subgraph  $\Gamma_U(A)\cong\overline{\Gamma_3}$. It implies that $\Gamma_U(R)$ can not be the complement of a line graph. If $\mathcal{M}^2\neq\{0\}$, then there exist some $x_i, x_j$ such that either ${x_i}^2\neq0$ or $x_ix_j\neq0$. With no loss of generality, assume that ${x_1}^2\neq0$. Then for the set $B=\{x_1,{x_1}^2,x_1+x_2,x_2+{x_1}^2, x_2\}$, we have $\Gamma_U(B)\cong\overline{\Gamma_3}$.
 With no loss of generality, we may now suppose that $x_1x_2\neq0$. Since $char(R)=4$, we have $2+x \ne x$ for all $x$. Now, consider the set $S=\{x_1,x_1+x_1x_2,x_2,2+x_1,x_1+x_2\}$. Then the subgraph  $\Gamma_U(S)\cong\overline{\Gamma_3}$. Consequently, $\Gamma_U(R)$ can not be the complement of a line graph. Thus, $\mathcal{M}^2=\{0\}$. If  $2x_i\neq0$ for some $x_i\in \mathcal{M}$. With no loss of generality, assume that $2x_1\neq0$. Consider the set $X=\{x_1,2x_1,x_2,x_1+x_2,x_2+2x_1\}$. Then the subgraph  $\Gamma_U(X)\cong\overline{\Gamma_3}$, and so  $\Gamma_U(R)$ can not be the complement of a line graph. Consequently,  $2x_i=0$ for all $i$. Conversely, suppose that $(\rm{i})$,$(\rm{ii})$ and $(\rm{iii})$ holds. Since $|\frac{R}{\mathcal{M}}|=2$, we have $R={(0+\mathcal{M})}\bigcup{(1+\mathcal{M})}$. For $x\in{\mathcal{M}} $, we have $1{+x}\in U(R)$. It follows that for $v\in U(R)$, $v=1+y$ for some $y\in{\mathcal{M}}$. Since ${\mathcal{M}}^2=\{0\}$, note that $vx=(1+y)x=x+xy=x$. Therefore, the vertex set of  $\Gamma_U(R)$ consists the elements of the form $\sum_{i=1}^{n} {\alpha_ix_i} $ and $2+\sum_{i=1}^{n} {\alpha_ix_i} $, where $\alpha_i\in \{0,1\}$ only. Observe that any two non-zero vertices of $\Gamma_U(R)$ are not adjacent and the element zero of $R$  is adjacent to every other vertex of $\Gamma_U(R)$. Hence, $\Gamma_U(R)$ is a star graph and so $\Gamma_U(R)=\overline{L(K_2\bigcup K_{1,|V(\Gamma_U(R))|-1}}).$
 \end{proof}

 \begin{theorem}\label{char=2}
 Let $R$ be a non-principal local ring such that  $char(R)=2$ and let $\mathcal{M}$ be the maximal ideal of $R$ with the minimal generating set $\{x_1,x_2,\ldots,x_n\}$, where $n\geq2$ Then $\Gamma_U(R)$ is the complement of a line graph if and only if $\frac{R}{\mathcal{M}}\cong \mathbb{Z}_2$ and either $\mathcal{M}^2=0$ or $\mathcal{M}=\langle x_1, x_2\rangle$ such that ${x_1}^2={x_2}^2=0$ and $x_1x_2\neq0 $. 
 \end{theorem}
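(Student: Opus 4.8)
The plan is to mimic the proof of Theorem~\ref{char=4}. For the forward implication I would assume $\Gamma_U(R)$ is the complement of a line graph and, invoking Theorem~\ref{complement}, rule out every configuration other than the two listed by exhibiting an induced copy of $\overline{\Gamma_1}$ (a triangle together with an isolated vertex) or of $\overline{\Gamma_3}$ (a single edge together with three isolated vertices). For the converse I would compute $\Gamma_U(R)$ outright in the two admissible situations and display a graph whose line graph is its complement. The recurring tool is that $\bar x_1,\dots,\bar x_n$ is a basis of $\mathcal{M}/\mathcal{M}^2$ over $R/\mathcal{M}$, so that whenever two non-units lie in a common proper principal ideal $(z)$, the image of $(z)$ in $\mathcal{M}/\mathcal{M}^2$ is at most one-dimensional; hence $(z)$ cannot contain two ``independent'' generators, which forces many of the needed non-adjacencies.

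\emph{Forward implication.} First I would show $R/\mathcal{M}\cong\mathbb{Z}_2$: if $|R/\mathcal{M}|\ge 3$, pick a unit $u$ with $1-u\notin\mathcal{M}$ and verify that $\{x_1,ux_1,x_2,x_1+x_2,x_2+ux_1\}$ induces $\overline{\Gamma_3}$, the only edge being $x_1\sim ux_1$ (since $(ux_1)=(x_1)$), the critical non-adjacency $x_1+x_2\not\sim x_2+ux_1$ coming from the fact that a common proper principal ideal would contain $(1-u)x_1$, hence $x_1$, hence $x_2$. Granting $R/\mathcal{M}\cong\mathbb{Z}_2$, I would assume $\mathcal{M}^2\ne\{0\}$ and aim at $n=2$ with $x_1^2=x_2^2=0$. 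For the subcase ``$x_i^2\ne 0$ for some $i$'' I would try the set $\{x_1,x_1^2,x_1+x_1^2,x_2\}$, whose first three vertices form a triangle inside $(x_1)$, hoping for an induced $\overline{\Gamma_1}$. Once all $x_i^2=0$, the hypothesis $\mathcal{M}^2\ne\{0\}$ gives $x_1x_2\ne 0$ after relabelling, and if $n\ge 3$ the set $\{x_1,\,x_1+x_1x_2,\,x_2,\,x_3,\,x_2+x_3\}$ induces $\overline{\Gamma_3}$: the unique edge is $x_1\sim x_1+x_1x_2$ (both in $(x_1)$, and $x_1+x_1x_2\ne x_1$ because $x_1x_2\ne 0$), while $x_2,x_3,x_2+x_3$ are pairwise non-adjacent and non-adjacent to the edge because any joining proper principal ideal would have to contain two of $x_1,x_2,x_3$. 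This forces $n=2$, and then $\mathcal{M}^2=(x_1x_2)\ne\{0\}$ together with $x_1^2=x_2^2=0$ supplies the three conditions.

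\emph{Converse.} Since $R/\mathcal{M}\cong\mathbb{Z}_2$, every unit has the form $1+m$ with $m\in\mathcal{M}$, so $(z)=\mathbb{Z}_2 z+z\mathcal{M}$ for $z\in\mathcal{M}$. If $\mathcal{M}^2=\{0\}$ this is $\{0,z\}$, so no two non-zero vertices of $\Gamma_U(R)$ are adjacent while $0$ is adjacent to all of them; hence $\Gamma_U(R)$ is the star on $|\mathcal{M}|$ vertices, and $\Gamma_U(R)=\overline{L(K_2\cup K_{1,\,|V(\Gamma_U(R))|-1})}$. If $\mathcal{M}=\langle x_1,x_2\rangle$ with $x_1^2=x_2^2=0$ and $x_1x_2\ne 0$, then $\mathcal{M}^3=\{0\}$, $|R|=16$, and the only principal ideals generated by non-units are $\{0\}$, the three ideals $(x_1),(x_2),(x_1+x_2)$ (each of size $4$) and $\mathcal{M}^2=(x_1x_2)$ (of size $2$); since $0$ and $x_1x_2$ belong to all of these, both are adjacent to every other vertex, and the remaining six vertices split into the three edges carried by $(x_1),(x_2),(x_1+x_2)$. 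Hence $\Gamma_U(R)\cong K_2\vee 3K_2=\overline{L(K_4\cup 2K_2)}$, the complement of a line graph.

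\emph{Main obstacle.} I expect the hard part to be the forward implication when $\mathcal{M}^2\ne\{0\}$, precisely the step ruling out $x_i^2\ne 0$. The dimension-counting argument governs adjacencies between a generator and \emph{sums of generators} but says nothing about whether a generator $x_i$ is adjacent to an element of $\mathcal{M}^2$, both having one-dimensional (or zero) image in $\mathcal{M}/\mathcal{M}^2$. If there is a relation such as $x_1^2=x_1x_2$, then $x_1^2\in(x_2)$ and $x_2\sim x_1^2$, so the tentative witness $\{x_1,x_1^2,x_1+x_1^2,x_2\}$ is not an induced $\overline{\Gamma_1}$. Making this step rigorous therefore requires a careful sub-division according to which products $x_ix_j$ vanish and how the non-zero ones lie inside the ideals $(x_i)$ and $(x_i+x_j)$ — this is where I would expect to spend most of the effort.
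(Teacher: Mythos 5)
Your overall strategy coincides with the paper's: the same five-vertex witness $\{x_1,ux_1,x_2,x_1+x_2,x_2+ux_1\}$ forces $R/\mathcal{M}\cong\mathbb{Z}_2$, the same set $\{x_1,x_1+x_1x_2,x_2,x_3,x_2+x_3\}$ eliminates $n\ge 3$, and your converse is correct — the star case is handled exactly as in the paper (via the converse of Theorem~\ref{char=4}), and your explicit identification of the eight-vertex case as $K_2\vee 3K_2=\overline{L(K_4\cup 2K_2)}$ agrees with (and is more explicit than) the paper's appeal to Figure~8, since the proper principal ideals are exactly $\{0\}$, $(x_1x_2)$, $(x_1)$, $(x_2)$, $(x_1+x_2)$ as you list them.

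The genuine gap is the one you flag yourself: the elimination of the case $x_i^2\ne 0$ is never actually carried out. Your candidate witness $\{x_1,x_1^2,x_1+x_1^2,x_2\}$ does not induce $\overline{\Gamma_1}$ whenever $x_1^2\in(x_2)$ (for instance $x_1^2=x_1x_2$, or $x_1^2=x_2^2$ with suitable relations), because then $x_2\sim x_1^2$ and the induced subgraph is a triangle with a pendant vertex, which creates no forbidden configuration; promising ``a careful sub-division according to which products vanish'' is a plan, not a proof. This step is moreover the linchpin of your whole forward direction, since in your ordering it must be settled for every $n\ge 2$ before you can even relabel to get $x_1x_2\ne 0$ and invoke the $n\ge 3$ witness. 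The paper resolves this point with a different, five-vertex set, $T=\{x_1,\,x_1^2,\,x_1+x_2,\,x_2+x_1^2,\,x_2\}$ (taken after $n=2$ has already been forced), and a case split on whether $x_1x_2=0$, obtaining an induced $\overline{\Gamma_2}$ in the first case and $\overline{\Gamma_3}$ in the second; some such larger witness with its own case analysis is what your argument is missing, and without it the forward implication is incomplete.
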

 \begin{proof}
 Suppose that $\Gamma_U(R)$ is the complement of a line graph. First let  $|{\frac{R}{\mathcal{M}}}|\geq3$. Consider the set  $A=\{x_1,ux_1,x_2,x_1+x_2,x_2+ux_1\}$, where $u\in U(R)\setminus\{1\}$ such that $1-u\not\in \mathcal{M}$. Observe that  $\Gamma_U(A)\cong\overline{\Gamma_3}$. It implies that  $\Gamma_U(R)$ can not be the complement of a line graph. Therefore, $|{\frac{R}{\mathcal{M}}}|=2$. If $\mathcal{M}^2=\{0\}$, then there is nothing to prove.

 Let $\mathcal{M}^2\neq\{0\}$. If $n\ge3$,  then for the set $S=\{x_1,x_1+x_1x_2,x_2,x_3,x_2+x_3\}$, we have $\Gamma_U(S)\cong\overline{\Gamma_3}$. It follows that  $\Gamma_U(R)$ can not be the complement of a line graph. Therefore, $n=2$. Suppose there exists some $x_i$ such that ${x_i}^2\neq0$. With no loss of generality, assume that ${x_1}^2\neq0$. For the set $T=\{x_1,{x_1}^2,x_1+x_2,x_2+{x_1}^2, x_2\}$, we obtain $\Gamma_U(T)\cong\overline {\Gamma_2}$, if $x_1x_2=0$ or  $\Gamma_U(T)\cong\overline{\Gamma_3}$, if $x_2x_2\neq0$. Consequently, $\Gamma_U(R)$ can not be the complement of a line graph. Thus, ${x_1}^2={x_2}^2=0$ and $x_1x_2\neq0$. 

 Conversely, first suppose that ${\mathcal{M}}^2=\{0\}$ and $\frac{R}{\mathcal{M}}\cong \mathbb Z_2$.
 Then, by the converse part of the Theorem \ref{char=4}, we have  $\Gamma_U(R)$ is a star graph and so is the complement of a line graph of $K_2\bigcup K_{1,|V(\Gamma_U(R))|-1}$. We may now assume that $\frac{R}{\mathcal{M}}\cong \mathbb Z_2$ and $\mathcal{M}=\langle x_1, x_2\rangle$ with ${x_1}^2={x_2}^2=0$, $x_1x_2\neq0 $. Note that  \[V(\Gamma_U(R))=\mathcal{M}=\{\alpha_1x_1+\alpha_2x_2+\alpha_3x_1x_2 \mid  \alpha_i\in U(R)\cup \{0\}\}.\] 
 Moreover, $\alpha_i=1+y$ for $y\in\mathcal{M}$. It follows that $V(\Gamma_U(R))=\{0,x_1,x_2,x_1x_2, x_1+x_2,x_1+x_1x_2, x_2+x_1x_2,x_1+x_2+x_1x_2\}$. By Figure 8,  $\Gamma_U(R)=\overline{L(K)}$. 
  \begin{figure}[h!]
 			\centering
 			\includegraphics[width=1 \textwidth]{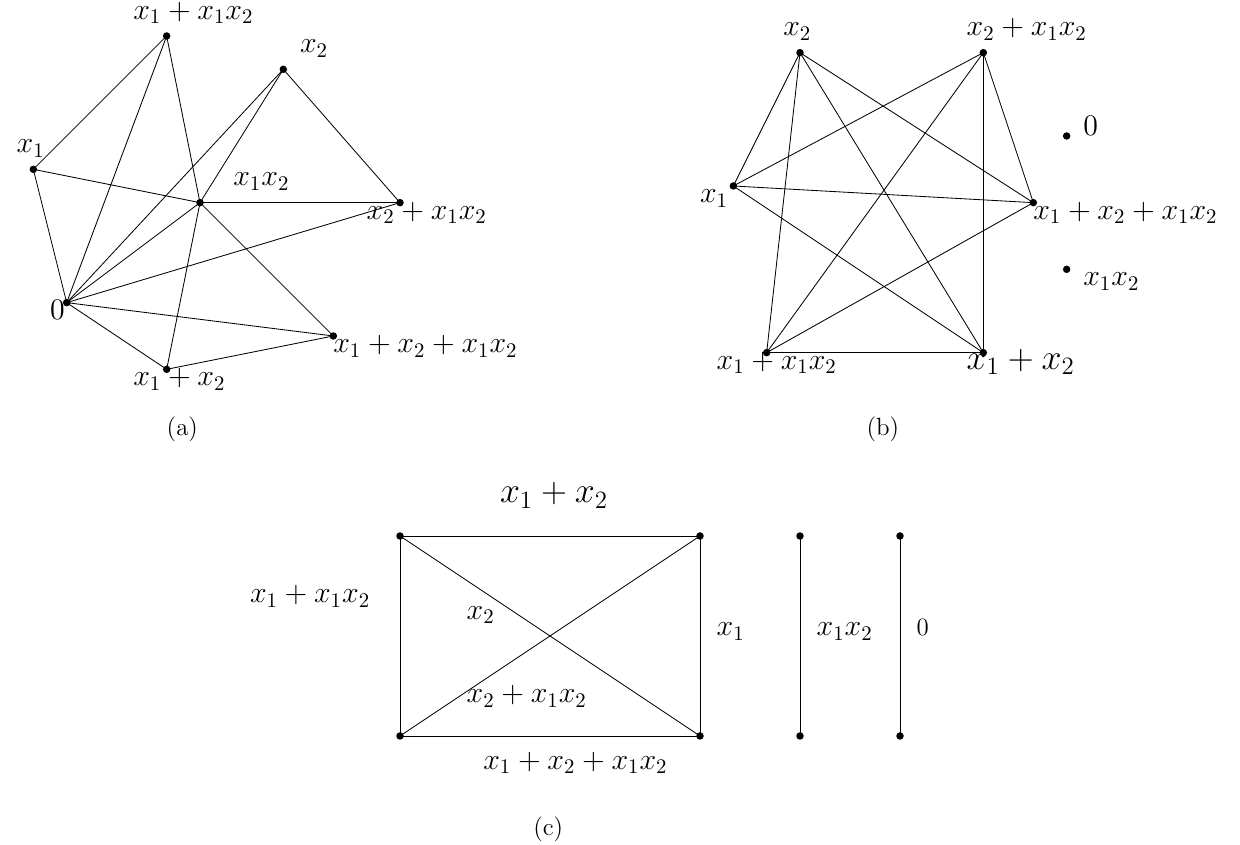}
 			\caption{ $\textbf{(a)}$ $\Gamma_U(R)$ $\hspace{0.2cm}$  $\textbf{(b)}$  $\overline{\Gamma_U(R)}$ $\hspace{.2cm}$ $\textbf{(c)}$ $K$ }
    \label{fig-8_graphs}
 \end{figure}
  
 \end{proof}

 \textbf{Acknowledgement:} The first and second authors extend their gratitude to the Birla Institute of Technology and Science (BITS) Pilani, India, for providing financial support.

 \vspace{.3cm}

 \textbf{Conflicts of interest/Competing interests}: There is no conflict of interest regarding the publishing of this paper.




    

\end{document}